\documentclass[11pt]{article}

\evensidemargin0cm \oddsidemargin0cm \textwidth16cm
\textheight23cm \topmargin-2cm

\usepackage{amsmath}
\usepackage{amsthm}
\usepackage{amsfonts}
\usepackage{bbm}
\usepackage{amssymb}
\usepackage{graphicx}
\usepackage{color}

\usepackage{blindtext}
\usepackage{enumitem}

\usepackage{fancybox}
\setlength{\fboxsep}{1.5ex}
\newlength{\querylen}
\setlength{\querylen}{\textwidth}
\addtolength{\querylen}{-2\fboxsep}

\newcommand{\Cov}{\mathrm{Cov}}
\DeclareMathOperator{\Var}{\mathrm{Var}}
\newcommand{\fdc}{\overset{\mathrm{f.d.}}{\Rightarrow}}

\newcommand{\dy}{\mathrm{d} \mathit{y}}

\newcommand{\mmp}{\mathbb{P}}

\newcommand{\tp}{\overset{{\rm P}}{\to}}

\newcommand{\me}{\mathbb{E}}

\newcommand{\mr}{\mathbb{R}}
\newcommand{\mn}{\mathbb{N}}

\newcommand{\lin}{\lim_{n\to\infty}}

\newcommand{\lit}{\underset{t\to\infty}{\lim}}

\DeclareMathOperator{\1}{\mathbbm{1}}

\newtheorem{thm}{Theorem}[section]
\newtheorem{lemma}[thm]{Lemma}

\theoremstyle{definition}
\newtheorem{define}[thm]{Definition}
\theoremstyle{remark}
\newtheorem{rem}[thm]{Remark}

\begin{document}
\title{Weak convergence of random processes with immigration at random times}

\author{Congzao Dong\footnote{School of Mathematics and Statistics, Xidian University, 710126 Xi'an, P.R. China;\newline e-mail: czdong@xidian.edu.cn} \
and \ Alexander Iksanov\footnote{School of Mathematics and Statistics, Xidian University, 710126 Xi'an, P.R. China and Faculty of Computer Science and
Cybernetics, Taras Shevchenko National University of Kyiv, 01601
Kyiv, Ukraine;\newline e-mail: iksan@univ.kiev.ua}}

\maketitle
\begin{abstract}
\noindent By a random process with immigration at random times we mean a shot noise process with a random response function (response process) in which shots occur at arbitrary random times. The so defined random processes generalize random processes with immigration at the epochs of a renewal process which were introduced in [Iksanov et al. (2017). Bernoulli, 23, 1233--1278] and bear a strong resemblance to a random characteristic in general branching processes and the counting process in a fixed generation of a branching random walk generated by a general point process. We provide sufficient conditions which ensure weak convergence of finite-dimensional distributions of these processes to certain Gaussian processes. Our main result is specialised to several particular instances of random times and response processes.
\end{abstract}

\noindent Key words: finite-dimensional distributions; random process with immigration; weak convergence

\noindent 2000 Mathematics Subject Classification: Primary: 60F05 \\
\hphantom{2000 Mathematics Subject Classification: }Secondary: 60G55
\section{Introduction}\label{Sect1}

\subsection{Definition of random processes with immigration at random times}

Let $D:=D[0,\infty)$ be the Skorokhod space
of right-continuous real-valued functions which are defined on
$[0,\infty)$ and have finite limits from the left at each positive
point. Denoting, as usual, by $\mn_0:=\mn\cup\{0\}$ the set of
nonnegative integers, let $(T_k)_{k\in\mn_0}$ be a collection of
nonnegative, not necessarily ordered points such that
\begin{equation}\label{nt}
N(t):=\#\{k\in\mn_0: T_k\leq t\}<\infty\quad\text{a.s.\ for each}\quad t\geq 0.
\end{equation}
%where $$N(t):=\#\{k\in\mn_0: T_k\leq t\},\quad t\geq 0.$$
Although in most of applications the number of nonzero $T_k$'s is a.s.\ infinite (then $\lim_{k\to\infty}T_k=\infty$ a.s.\ is a sufficient condition for \eqref{nt}), the case of a.s.\ finitely many points is also allowed. Further, let $(X_j)_{j\in\mn}$ be independent copies of a random
process $X$ with paths in $D$ which vanishes on the negative
halfline. Finally, we assume that, for each $k\in\mn_0$, $X_{k+1}$
is independent of $(T_0, \ldots, T_k)$. In particular, the case of
complete independence of $(X_j)_{j\in\mn}$ and $(T_k)_{k\in\mn_0}$
is not excluded.

Put
$$Y(t):=\sum_{k\geq 0}X_{k+1}(t-T_k),\quad t\in\mr$$ (note that $Y(t)=0$ for $t<0$). We shall call $Y:=(Y(t))_{t\in\mr}$ {\it random process
with immigration at random times}. The interpretation is that
associated with the $k$th immigrant which arrives at time
$T_{k-1}$ is the random process $X_k$ which describes some model-dependent `characteristics' of the $k$th immigrant, for instance, $X_k(t-T_{k-1})$ may be the number of offspring of the immigrant at time $t$ or the fitness of the immigrant at time $t$. The value of $Y(t)$ is then given by the sum of `characteristics' of all immigrants that arrived up to and including time $t$.

\subsection{Pointers to earlier literature and relation of random processes with immigration at random times to other models}

When $(T_k)_{k\in\mn_0}$ is a zero-delayed standard random walk
with nonnegative jumps, that is, $T_0=0$ and
$(T_k-T_{k-1})_{k\in\mn}$ are independent identically distributed
nonnegative random variables, the random process $Y$ was called in
\cite{Iksanov+Marynych+Meiners:2017a} a {\it random process with
immigration at the epochs of a renewal process}. Thus, the set of
the latter processes constitutes a proper subset of the set of the
random processes with immigration at random times. We refer to \cite{Iksanov:2016} and \cite{Iksanov+Marynych+Meiners:2017a} for detailed surveys concerning earlier works on random processes with immigration at the epochs of a Poisson or renewal process. A non-exhaustive list of more recent contributions, not covered in the cited sources, includes \cite{Iksanov+Jedidi+Bouzzefour:2018}, \cite{Iksanov+Kabluchko:2018a}, \cite{Iksanov+Kabluchko:2018b},  \cite{Marynych+Verovkin:2017} and \cite{Pang+Taqqu:2019}.

Articles are relatively rare which focus on the random processes with immigration at random times other than renewal times. A selection of these can be traced via the references given in the recent article \cite{Pang+Zhou:2018}. The authors of \cite{Pang+Zhou:2018} investigate the random process of the form $$Y(t)=\sum_{k\geq 1}X_k(t-T_k)\1_{\{T_k\leq t\}},\quad t\geq 0,$$ where $X_k(t)=H(t,\eta_k)$ for $k\in\mn$, $H:[0,\infty)\times \mr^n\to \mr$ is a deterministic measurable function and $\eta_k$ is an $\mr^n$-valued random vector. Since $\eta_1$, $\eta_2,\ldots$ are assumed to be conditionally independent given $(T_j)_{j\in\mn}$ (rather than just independent), and $\eta_k$ is allowed to depend on $T_k$, the model in \cite{Pang+Zhou:2018} is slightly different from ours.

In \cite{Iksanov+Rashytov:2019}, another quite recent paper, functional limit theorems are proved for random processes with immigration at random times. There, the standing assumption is that $X$ is an eventually nondecreasing deterministic function which is regularly varying at $\infty$ of nonnegative index. We stress that the techniques used in the present work and in \cite{Iksanov+Rashytov:2019} are very different. 

Random processes with immigration at random times can be thought of as natural successors of two well-known branching processes: the general branching process (GBP) counted with random characteristic (see pp.~362-363 in \cite{Asmussen+Hering:1983}) and the counting process in a branching random walk (BRW). To define the GBP imagine a population initiated by a single ancestor at time $0$.
Denote by
\begin{itemize}

\item $\mathcal{T}$ a point process on $[0,\infty)$ describing the instants of time at which generic individual produces offspring;

\item $\Phi$ a random characteristic which is a random process on $\mr$ which vanishes on the negative halfline; the processes $\mathcal{T}$ and $\Phi$ are allowed to be arbitrarily dependent;

\item $J$ the collection of ever born individuals of the population.
\end{itemize}
Associated with each individual $n\in J$ is its birth time $\sigma_n$ and a random pair $(\mathcal{T}_n, \Phi_n)$, a copy of $(\mathcal{T}, \Phi)$. Furthermore, for different individuals these copies are independent. The GBP is given by $$Z(t):=\sum_{n\in J}\Phi_n(t-\sigma_n),\quad t\geq 0.$$ If $\Phi(t)=1$ for all $t\geq 0$, then $Z(t)$ is the total number of births up to and including time $t$. If $\Phi(t)=\1_{\{\tau>t\}}$ for a positive random variable $\tau$ interpreted as the lifetime of generic individual, then $Z(t)$ is the number of individuals alive at time $t$. More examples of this flavor can be found on p.~363 in \cite{Asmussen+Hering:1983}.

Consider now a BRW with positions of the $j$th generation individuals given by $(T(v))_{v\in\mathbb{V}_j}$ for $j\in\mn$, where $\mathbb{V}_j$ is the set of words of length $j$ over $\mn$ and for the individual $v\in \mathbb{V}_j$ its position on the real line is denoted by $T(v)$. Set $N_j(t):=\#\{v\in\mathbb{V}_j: T(v)\leq t\}$ for $t\in \mr$, so that $N_j(t)$ is the number of individuals in the $j$th generation of the BRW with positions $\leq t$. With the help of a branching property we obtain the basic decomposition
\begin{equation}\label{x}
N_j(t):=\sum_{v\in\mathbb{V}_{j-1}}N_1^{(v)}(t-T(v)),\quad t\in\mr,
\end{equation}
where $(N_1^{(v)}(t))_{t\geq 0}$ for $v\in\mathbb{V}_{j-1}$ are independent copies of $(N_1(t))_{t\geq 0}$ which are also independent of the $T(v)$, $v\in\mathbb{V}_{j-1}$. Motivated by an application to certain nested infinite occupancy schemes in a random environment the authors of the recent article \cite{Gnedin+Iksanov:2019} proved functional limit theorems in $D^\mn$ for $\Big(\frac{N_j(t\cdot)-a_j(t\cdot)}{b_j(t)}\Big)_{j\in\mn}$ with appropriate centering and normalizing functions $a_j$ and $b_j$. The standing assumption of \cite{Gnedin+Iksanov:2019} is that the positions $(T(v))_{v\in\mathbb{V}_1}$ are given by $(-\log P_k)_{k\in\mn}$, where $P_1$, $P_2,\ldots$ are positive random variables with an arbitrary joint distribution satisfying $\sum_{k\geq 1}P_k=1$ a.s.

\section{Main result}

Throughout the remainder of the paper we assume that $\me X(t)=0$ for all $t\geq
0$ and that the covariance
\begin{equation*}
f(u,w) := \Cov (X(u),X(w)) = \me X(u)X(w)
\end{equation*}
is finite for all $u,w \geq 0$. The variance of $X$ will be
denoted by $v$, that is, $v(t):=f(t,t)=\Var X(t)$.

Following \cite{Iksanov+Marynych+Meiners:2017a} we recall several notions related to regular variation in $\mr^2_+:=(0,\infty)\times (0,\infty)$. We refer to \cite{Bingham+Goldie+Teugels:1989} for an encyclopaedic treatment of regular variation on the positive halfline.
\begin{define}\label{regular_variation_R^2}
A function $r: [0,\infty)\times [0,\infty)\to\mr$ is {\it
regularly varying} in $\mr^2_+$ if there exists a function
$C: \mr^2_+ \to (0,\infty)$ such that
\begin{equation*}
\lit {r(ut,wt)\over r(t,t)}=C(u,w), \ \ u,w>0.
\end{equation*}
\end{define}
The function $C$ is called {\it limit function}. The definition
implies that $r(t,t)$ is  regularly varying at $\infty$, i.e.,
$r(t,t) \sim t^\beta \ell(t)$ as $t\to\infty$ for some $\ell$
slowly varying at $\infty$ and some $\beta\in\mr$ which is called
the {\it index of regular variation}. In particular,
$C(a,a)=a^{\beta}$ for all $a>0$ and further
$$C(au,aw)=C(a,a)C(u,w)=a^\beta C(u,w)$$ for all $a,u,w>0$.
\begin{define}\label{regular_variation_R^21}
A function $r: [0,\infty)\times [0,\infty)\to\mr$ will be called
{\it fictitious regularly varying} of index $\beta$ in $\mr^2_+$
if
\begin{equation*}
\lit {r(ut,wt)\over r(t,t)}=C(u,w),\quad u,w>0,
\end{equation*}
where $C(u,u):=u^\beta$ for $u>0$ and $C(u,w):=0$ for $u,w>0$,
$u\neq w$. A function $r$ will be called {\it wide-sense regularly
varying} of index $\beta$ in $\mr^2_+$ if it is either regularly
varying or fictitious regularly varying of index $\beta$ in
$\mr^2_+$.
\end{define}
The function $C$ corresponding to a fictitious regularly varying
function will also be called {\it limit function}.

The processes introduced in Definition \ref{definition_v_process} arise as weak limits in Theorem \ref{Prop:mu<infty} which is our main result. We
shall show that these are well-defined at the beginning of
Section \ref{pr}.
\begin{define}\label{definition_v_process}
Let $\rho>0$ and $C$ be the limit function for a wide-sense regularly varying
function (see Definition \ref{regular_variation_R^21}) in
$\mr^2_+$ of
index $\beta$ for some $\beta\in (-1,\infty)$. We shall denote by $V_{\beta,\rho}:=(V_{\beta,\rho}(u))_{u> 0}$ a
centered Gaussian process with the covariance
\begin{equation*}
\me V_{\beta,\rho}(u)V_{\beta,\rho}(w) =\int_0^{u\wedge w}
C(u-y, w-y) \,{\rm d}y^\rho=\rho \int_0^{u\wedge w}
C(u-y, w-y)y^{\rho-1} \, \dy, \quad u,w>0,
\end{equation*}
when $C(s,t) \neq 0$ for some $s,t>0$, $s \neq t$, and a centered
Gaussian process with independent values and variance $\me
V_{\beta,\rho}^2(u) =\rho {\rm B}(\beta+1, \rho) u^{\beta+\rho}$, otherwise. Here and hereafter, ${\rm B}(\cdot,\cdot)$ denotes the beta function.
\end{define}

Theorem \ref{Prop:mu<infty} given below is an extension of Proposition 2.1 in \cite{Iksanov+Marynych+Meiners:2017a} which treats the case where $(T_k)_{k\in\mn_0}$ is a zero-delayed ordinary random walk with positive increments. We shall write $Z_t(u)\fdc Z(u)$, $t\to\infty$ to denote weak convergence of finite-dimensional distributions, that is, for any $n\in\mn$ and any $0<u_1<u_2<\ldots<u_n<\infty$, $(Z_t(u_1),\ldots, Z_t(u_n))$ converges in distribution to $(Z(u_1),\ldots, Z(u_n))$, as $t\to\infty$. Also, as usual, $\tp$ denotes convergence in probability.
\begin{thm}\label{Prop:mu<infty}
Let finite $c,\rho>0$ and $\beta> -(\rho\wedge 1)$ be given. Assume that
\begin{itemize}
\item $v$ is a locally bounded function; $f(u,w) = \Cov(X(u), X(w))$ is a wide-sense regularly varying function of index $\beta$ in $\mr_+^2$ with limit function $C$;
\begin{equation}\label{0040}
\lit \underset{a\leq u\leq
b}{\sup}\,\bigg|{f(ut,(u+w)t)\over v(t)}-C(u,u+w)\bigg|=0
\end{equation}
for every $w>0$ and all $0<a<b<\infty$; when $f(u,w)$ is regularly varying, the function $u\mapsto C(u,u+w)$ is a.e.\ continuous on $(0,\infty)$ for every $w>0$;

\vspace{-0.25cm}
\item for all $y>0$
\begin{equation}    \label{eq:Lindeberg X-h}
v_y(t) := \me\Big(X^2(t)\1_{\{|X(t)|>y\sqrt{t^\rho v(t)}\}}\Big) = o(v(t)),\quad t\to\infty;
\end{equation}
%\vspace{-0.25cm}
\item
\begin{equation}\label{weak}
\sup_{y\in[0,\,T]}\Big|\frac{N(ty)}{t^\rho}-cy^\rho\Big|~\tp~0,\quad t\to\infty
\end{equation}
for all $T>0$;
\vspace{-0.25cm}
\item if $\beta\in (-(\rho\wedge 1), 0]$, then $\me N(t)<\infty$ for all $t\geq 0$ and
\begin{equation}\label{incr1}
\me (N(t)-N(t-1))=O(t^{\rho-1}),\quad t\to\infty.
\end{equation}
\end{itemize}
Then
\begin{equation}    \label{eq:1st summand convergence}
\frac{Y(ut)}{\sqrt{ct^\rho v(t)}} ~\fdc~ V_{\beta,\rho}(u), \quad
t\to\infty
\end{equation}
where $V_{\beta,\rho}$ is a centered Gaussian process introduced
in Definition \ref{definition_v_process}.
\end{thm}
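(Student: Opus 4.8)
The plan is to establish the finite-dimensional convergence \eqref{eq:1st summand convergence} through the Cram\'er--Wold device together with a martingale central limit theorem. Fix $n\in\mn$, reals $\lambda_1,\ldots,\lambda_n$ and points $0<u_1<\cdots<u_n$; by Cram\'er--Wold it suffices to prove that
$$S_t:=\frac{1}{\sqrt{ct^\rho v(t)}}\sum_{k\geq 0}\sum_{i=1}^n \lambda_i X_{k+1}(u_it-T_k)=:\sum_{k\geq 0}\xi_{k,t}$$
converges in distribution to a centered normal law with variance $\sigma^2:=\sum_{i,j}\lambda_i\lambda_j\,\me V_{\beta,\rho}(u_i)V_{\beta,\rho}(u_j)$. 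By \eqref{nt} only finitely many summands are nonzero a.s., so $S_t$ is well defined. Setting $\mathcal{G}_k:=\sigma(T_0,\ldots,T_k,X_1,\ldots,X_k)$, the assumption that $X_{k+1}$ is independent of $(T_0,\ldots,T_k)$, combined with $\me X\equiv 0$, makes $(\xi_{k,t})_{k\geq 0}$ a martingale-difference array with respect to $(\mathcal{G}_k)_{k\geq 0}$, since $T_k$ is $\mathcal{G}_k$-measurable while $X_{k+1}$ is independent of $\mathcal{G}_k$ and centered. I would then invoke a standard martingale central limit theorem, which reduces the proof to two statements: convergence in probability of the sum of conditional variances to $\sigma^2$, and a conditional Lindeberg condition.

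For the conditional variances, using that $X_{k+1}$ is distributed as $X$ and independent of $\mathcal{G}_k$ gives
$$V_t:=\sum_{k\geq 0}\me[\xi_{k,t}^2\mid\mathcal{G}_k]=\frac{1}{ct^\rho v(t)}\sum_{i,j}\lambda_i\lambda_j\sum_{k\geq 0}f(u_it-T_k,\,u_jt-T_k),$$
where $f$ is extended by $0$ as soon as one of its arguments is negative (so each inner sum runs effectively over $T_k\leq(u_i\wedge u_j)t$). Writing the inner sum as $\int_{[0,\infty)}f((u_i-y)t,(u_j-y)t)\,{\rm d}N(ty)$ and decomposing $u_j-y=(u_i-y)+(u_j-u_i)$, the idea is to carry out two approximations simultaneously: replace $f((u_i-y)t,(u_j-y)t)/v(t)$ by its limit $C(u_i-y,u_j-y)$ using the uniform convergence \eqref{0040}, and replace the random measure $t^{-\rho}{\rm d}N(ty)$ by the deterministic limit $c\,{\rm d}y^\rho$ using \eqref{weak}. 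This should yield $V_t~\tp~\sum_{i,j}\lambda_i\lambda_j\int_0^{u_i\wedge u_j}C(u_i-y,u_j-y)\,{\rm d}y^\rho=\sigma^2$, the factor $c$ cancelling against the normalisation. In the fictitious regularly varying case the limit $C$ vanishes off the diagonal, so all cross terms $i\neq j$ drop out and the diagonal terms reduce to $\rho\,{\rm B}(\beta+1,\rho)u_i^{\beta+\rho}$, reproducing the Gaussian process with independent values from Definition \ref{definition_v_process}.

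The conditional Lindeberg condition $\sum_{k\geq 0}\me[\xi_{k,t}^2\ind{|\xi_{k,t}|>\ve}\mid\mathcal{G}_k]~\tp~0$ for every $\ve>0$ I would deduce from \eqref{eq:Lindeberg X-h}. By the Cauchy--Schwarz inequality the event $\{|\xi_{k,t}|>\ve\}$ is contained in $\bigcup_{i=1}^n\{|X_{k+1}(u_it-T_k)|>\ve'\sqrt{t^\rho v(t)}\}$ for a suitable $\ve'>0$, so each conditional summand is bounded by a sum over $i$ of terms $\me(X^2(a)\ind{|X(a)|>\ve'\sqrt{t^\rho v(t)}})$ evaluated at $a=u_it-T_k$, which are then summed against $t^{-\rho}{\rm d}N(ty)$ as before. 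The delicate point is that \eqref{eq:Lindeberg X-h} controls the truncated second moment only when the argument matches the truncation scale, whereas here the level $\sqrt{t^\rho v(t)}$ is held fixed while the argument $u_it-T_k$ sweeps across $(0,u_it)$; transferring the estimate uniformly over this range, using local boundedness and wide-sense regular variation of $v$, is one of the technical hurdles.

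I expect the principal obstacle to be the conditional-variance step, where the two approximations must be controlled jointly near the upper endpoint $y\uparrow u_i\wedge u_j$, at which the argument of $f$ tends to $0$ and the regularly varying replacement degenerates. The behaviour there is governed by the sign of $\beta$: for $\beta>0$ the integrand $C(u_i-y,u_j-y)$ stays bounded, whereas for $\beta\in(-(\rho\wedge1),0]$ it is singular at $y=u_i\wedge u_j$, and one must bound the expected number of $T_k$ in shrinking intervals just below $(u_i\wedge u_j)t$; this is precisely the role of \eqref{incr1}, the constraint $\beta>-(\rho\wedge1)$ being exactly what renders $\int_0^{u\wedge w}C(u-y,w-y)\,{\rm d}y^\rho$ finite. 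Justifying the interchange of these limits --- via a Helly--Bray/integration-by-parts argument together with the a.e.\ continuity of $y\mapsto C(u_i-y,u_j-y)$ assumed in the regularly varying case --- is, I anticipate, the crux of the argument.
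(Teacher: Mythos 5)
Your proposal follows essentially the same route as the paper's own proof: the Cram\'er--Wold device combined with the martingale central limit theorem for the filtration $\sigma(T_0,X_1,T_1,\ldots,X_k,T_k)$, with the conditional variances rewritten as integrals of $f(t(u_i-y),t(u_j-y))/v(t)$ against $t^{-\rho}\,{\rm d}N(ty)$ and passed to the limit by combining \eqref{0040}, \eqref{weak} and a Helly--Bray argument resting on the a.e.\ continuity of the limit function, while the endpoint singularity for $\beta\in(-(\rho\wedge 1),0]$ is controlled by Potter-type bounds together with \eqref{incr1}, exactly as you anticipate. The two technical hurdles you single out are precisely those the paper resolves, and with the tools you name: in particular, the truncation-scale mismatch in the Lindeberg step is handled by using regular variation to replace $t^\rho v(t)$ with a nondecreasing asymptotic equivalent, after which monotonicity transfers the truncation level from scale $t$ to scale $t-T_k$ and reduces everything to \eqref{eq:Lindeberg X-h}.
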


\begin{rem}
The condition $\beta>-\rho$ is obviously needed to guarantee that the normalization $\sqrt{ct^\rho v(t)}$ diverges to $\infty$, as $t\to\infty$. Since $\me
V_{\beta,\,\rho}^2(u) =\rho {\rm B}(\beta+1, \rho)u^{\beta+\rho}$, the limit process $V_{\beta,\,\rho}$ is not well-defined unless $\beta>-1$.
\end{rem}

\begin{rem}\label{rem2}
Condition \eqref{weak} entails that the number of positive $T_k$'s is a.s.\ infinite. A simple sufficient condition for \eqref{weak} is
\begin{equation}\label{inter}
\lim_{t\to\infty} t^{-\rho} N(t)=c\quad\text{a.s.}
\end{equation}
Indeed, the latter entails $\lim_{t\to\infty} t^{-\rho} N(ty)=cy^\rho$ a.s.\ for each fixed $y\geq 0$. Furthermore, the convergence is locally uniform in $y$ a.s., that is, \eqref{weak} holds a.s. (hence, in probability) as the convergence of monotone functions to a continuous limit.

If $T_0<T_1<\ldots$ a.s., then a standard inversion procedure ensures that \eqref{inter} is equivalent to $\lim_{k\to\infty} k^{-1/\rho}T_k=c^{-1/\rho}$ a.s. If the collection $(T_k)_{k\in\mn_0}$ is not ordered or ordered in the nondecreasing (rather than increasing) order the aforementioned equivalence may fail to hold.
\end{rem}

\section{Applications}

In this section we discuss how Theorem \ref{Prop:mu<infty} reads for some particular $(T_k)_{k\in\mn_0}$ and $X$.

\subsection{Particular $(T_k)$}

\subsubsection{Perturbed random walks}\label{prw}

Let $(\xi_k, \eta_k)_{k\in\mn}$ be independent copies of a random
vector $(\xi, \eta)$ with positive arbitrarily dependent
components. Denote by $(S_k)_{k\in\mn_0}$ the zero-delayed ordinary random walk with nondegenerate at zero increments $\xi_k$, that
is, $S_0:=0$ and $S_k:=\xi_1+\ldots+\xi_k$ for $k\in\mn$. Consider
a {\it perturbed} random walk
\begin{equation}\label{perturbed}
T_k:=S_{k-1}+\eta_k,\quad k\in \mn.
\end{equation}
It is convenient to define the corresponding counting process on $\mr$ rather than on $[0,\infty)$, that is, $N(t)=\#\{k\in\mn: T_k\leq t\}$ for $t\in\mr$. Then, of course, $N(t)=0$ a.s.\ for $t<0$.

Condition \eqref{weak} holds for this particular $N(t)$ in view of Lemma \ref{appl_BS} in combination with Remark \ref{rem2}.
\begin{lemma}\label{appl_BS}
If $\mu:=\me \xi<\infty$, then
$\lim_{t\to\infty} t^{-1} N(t)=\mu^{-1}$ a.s.
\end{lemma}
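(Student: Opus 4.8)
The plan is to sandwich $N(t)$ between two quantities governed by the renewal counting function of the underlying random walk $(S_k)_{k\in\mn_0}$, for which the strong law is classical. Write $\nu(t):=\#\{j\in\mn_0: S_j\leq t\}$ for the number of renewals of $(S_k)$ up to time $t$. Since $\xi>0$ a.s.\ forces $\mu=\me\xi\in(0,\infty)$, the strong law for renewal processes (itself a consequence of $S_n/n\to\mu$ a.s.) gives $\lim_{t\to\infty}\nu(t)/t=\mu^{-1}$ a.s. For the upper bound I would observe that $\eta_k>0$ a.s.\ yields $S_{k-1}<T_k$, so $T_k\leq t$ implies $S_{k-1}\leq t$; hence $N(t)\leq \nu(t)$ and $\limsup_{t\to\infty}N(t)/t\leq \mu^{-1}$ a.s.

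The lower bound is the only delicate point, and the main obstacle is that no moment assumption is placed on the perturbations $\eta_k$: when $\me\eta=\infty$ one has $\limsup_k\eta_k/k=\infty$ a.s., so one cannot argue directly that $T_k\approx S_{k-1}\approx \mu k$. I would circumvent this by truncating at a fixed level $M>0$. If $S_{k-1}\leq t-M$ and $\eta_k\leq M$, then $T_k=S_{k-1}+\eta_k\leq t$; and by strict monotonicity of $(S_k)$ one has $\{k\geq 1: S_{k-1}\leq t-M\}=\{1,\ldots,\nu(t-M)\}$. Consequently
$$N(t)\ \geq\ \sum_{k=1}^{\nu(t-M)}\ind{\eta_k\leq M}.$$

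To pass to the limit I would combine two almost sure statements. The ordinary strong law applied to the i.i.d.\ indicators $\ind{\eta_k\leq M}$ gives $n^{-1}\sum_{k=1}^{n}\ind{\eta_k\leq M}\to \mmp(\eta\leq M)$ a.s., while $\nu(t-M)\to\infty$ a.s.\ and $\nu(t-M)/t\to\mu^{-1}$ a.s. Composing these limits pathwise (which is legitimate on the intersection of the two probability-one sets and requires no independence between the $\xi$'s and the $\eta$'s, a point worth stressing since $(\xi,\eta)$ may be dependent) yields
$$\frac{1}{t}\sum_{k=1}^{\nu(t-M)}\ind{\eta_k\leq M}=\frac{\nu(t-M)}{t}\cdot\frac{1}{\nu(t-M)}\sum_{k=1}^{\nu(t-M)}\ind{\eta_k\leq M}\ \to\ \frac{\mmp(\eta\leq M)}{\mu}\quad\text{a.s.}$$

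Hence $\liminf_{t\to\infty}N(t)/t\geq \mmp(\eta\leq M)/\mu$ a.s.\ for every $M>0$. Finally, letting $M\to\infty$ and using $\mmp(\eta\leq M)\uparrow\mmp(\eta<\infty)=1$ (as $\eta$ is a.s.\ finite) gives $\liminf_{t\to\infty}N(t)/t\geq \mu^{-1}$ a.s., which together with the upper bound established above yields $\lim_{t\to\infty}N(t)/t=\mu^{-1}$ a.s. I expect the truncation step, and the careful bookkeeping of the index shift $j=k-1$ relating $N$ to $\nu$, to be the only places requiring genuine care; everything else reduces to the two strong laws.
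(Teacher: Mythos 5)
Your proof is correct and follows essentially the same route as the paper's: both sandwich $N(t)$ between $\nu(t)$ and a truncated count (the paper uses $\nu(t-y)-\sum_{k=1}^{\nu(t)}\ind{\eta_k>y}$, you use $\sum_{k=1}^{\nu(t-M)}\ind{\eta_k\leq M}$, which is the same truncation idea), then combine the strong law for the i.i.d.\ indicators with the renewal strong law for $\nu$ pathwise, and finally let the truncation level tend to infinity. No gaps worth noting.
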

\begin{proof}
Set $\nu(t):=\sum_{k\geq 0}\1_{\{S_k\leq t\}}$ for $t\geq 0$. For
$t>0$ and $y\in (0,t)$, the following inequalities hold with
probability one
\begin{equation}\label{ineq100}
\nu(t-y)-
\sum_{k=1}^{\nu(t)}\1_{\{\eta_k>y\}}=\sum_{k=1}^{\nu(t)}\1_{\{S_{k-1}\leq
t-y\}}- \sum_{k=1}^{\nu(t)}\1_{\{\eta_k>y\}}\leq
\sum_{k=1}^{\nu(t)}\1_{\{S_{k-1}+\eta_k\leq t\}}=N(t)\leq
\nu(t).
\end{equation}
By the strong law of large numbers for ordinary random walks $\lin
n^{-1}\sum_{k=1}^n\1_{\{\eta_k>y\}}=\me
\1_{\{\eta>y\}}=\mmp\{\eta>y\}$ a.s. Since $\lit \nu(t)=\infty$
a.s., it follows that $\lim_{t\to\infty}
\sum_{k=1}^{\nu(t)}\1_{\{\eta_k>y\}}/\nu(t)=\mmp\{\eta>y\}$ a.s.
Recall that $\lit t^{-1}\nu(t)=\mu^{-1}$ a.s.\ by the strong
law of large numbers for renewal processes, whence
$${\sum_{k=1}^{\nu(t)}\1_{\{\eta_k>y\}}\over t}={\sum_{k=1}^{\nu(t)}\1_{\{\eta_k>y\}}\over \nu(t)}{\nu(t)\over t}~\to~\frac{\mmp\{\eta>y\}}{\mu}\quad
\text{a.s.}$$ as $t\to\infty$. Hence, using \eqref{ineq100} we
infer that
$$\mu^{-1}-\mu^{-1}\mmp\{\eta>y\}\leq
\underset{t\to\infty}{\lim\inf}\,t^{-1}N(t)\leq
\underset{t\to\infty}{\lim\sup}\, t^{-1}N(t) \leq \mu^{-1}\quad\text{a.s.}$$ Letting $y\to\infty$ gives
$\lim_{t\to\infty} t^{-1} N(t)=\mu^{-1}$ a.s.
\end{proof}

To take care of the case when $\beta\in (-1,0)$ in Theorem \ref{Prop:mu<infty} we note that
\begin{equation}\label{equ}
\me N(t)=\me U(t-\eta)=\int_{[0,\,t]}U(t-y){\rm d} G(y),\quad t\in\mr
\end{equation}
where, for $t\in\mr$, $U(t):=\sum_{k\geq 0}\mmp\{S_k\leq t\}$ is
the renewal function and $G(t):=\mmp\{\eta\leq t\}$. In particular, by monotonicity and our assumption that $\mmp\{\xi=0\}<1$, $\me N(t)\leq U(t)<\infty$ for all $t\geq 0$. Further, condition \eqref{incr1} holds because subadditivity of $U$ on $\mr$ entails $0\leq \me (N(t)-N(t-1))\leq U(1)$.

\subsubsection{Non-homogeneous Poisson process}\label{nonhom}

Assume that $(N(t))_{t\geq 0}$ is a non-homogeneous Poisson process with mean function $m(t):=\me N(t)$ for $t\geq 0$ which satisfies $m(t)\sim c_0 t^{\rho_0}$ as $t\to\infty$ for some positive $c_0$ and $\rho_0$. We can identify the process $(N(t))_{t\geq 0}$ with the process $(\mathcal{P}(m(t)))_{t\geq 0}$, where $(\mathcal{P}(t))_{t\geq 0}$ is a homogeneous Poisson process of unit intensity. As a consequence of the strong law of large numbers for $\mathcal{P}(t)$ we obtain $\lim_{t\to\infty}t^{-\rho_0}N(t)=c_0 $ a.s. In view of Remark \ref{rem2} condition \eqref{weak} holds for the present $N(t)$ with $c=c_0$ and $\rho=\rho_0$. An additional assumption $m(t)-m(t-1)=O(t^{\rho_0-1})$ as $t\to\infty$ guarantees that condition \eqref{incr1} also holds.

\subsubsection{Positions in the $j$th generation of a branching random walk}

Consider a BRW generated by a point process with the points given by the successive positions of the same random walk $(S_n)_{n\geq 1}$ as in Section \ref{prw}. Assume that $\mu=\me\xi<\infty$. Denote by $(T_{k,j})_{k\in\mn}$, $j\in\mn$ the positions of the $j$th generation individuals and by $N_j(t)$, $j\in\mn$, $t\geq 0$, the number of the $j$th generation individuals with positions $\leq t$. In this example we identify $(T_k)_{k\in\mn_0}$ with $(T_{k,j})_{k\in\mn}$ for some integer $j\geq 2$, hence $N(t)$ with $N_j(t)$.

Set $U_j(t):=\me N_j(t)$ for $j\in\mn$ and $t\geq 0$. From the representation which is a counterpart of \eqref{x}
\begin{equation}\label{z}
N_j(t)=\sum_{k\geq 1}N_1^{(k)}(t-T_{k,j-1}),\quad t\geq 0
\end{equation}
where $(N_1^{(1)}(t))_{t\geq 0}$, $(N_1^{(2)}(t))_{t\geq 0},\ldots$ are independent copies
of $(N_1(t))_{t\geq 0}$ which are independent of $(T_{k,j-1})_{k\in\mn}$, we obtain $$U_j(t)=\int_{[0,\,t]}U_1(t-y){\rm d}U_{j-1}(y),\quad t\geq 0.$$
By the elementary renewal theorem, $U_1(t)=O(t)$ as $t\to\infty$. Further, by monotonicity, $U_j(t)\leq U_1(t)U_{j-1}(t)$ for $t\geq 0$ which shows that $U_j(t)<\infty$ for all $t\geq 0$ and that
\begin{equation}\label{y}
U_j(t)=O(t^j),\quad t\to\infty.
\end{equation}

To show that \eqref{incr1} holds we write by using subadditivity of $U_1(t)+1$ and monotonicity of $U_1(t)$
\begin{eqnarray*}
U_j(t)-U_j(t-1)&=&\int_{[0,\,t-1]}(U_1(t-y)-U_1(t-1-y)){\rm d}U_{j-1}(y)+\int_{(t-1,\,t]}U_1(t-y){\rm d}U_{j-1}(y)\\&\leq&
(U_1(1)+1)U_{j-1}(t-1)+U_1(1)(U_{j-1}(t)-U_{j-1}(t-1))\\&\leq& (U_1(1)+1)U_{j-1}(t).
\end{eqnarray*}
Invoking \eqref{y} proves \eqref{incr1} with $\rho=j$.

To check \eqref{weak} we assume for simplicity that $\sigma^2:=\Var \xi<\infty$ (this condition is by no means necessary but enables us to avoid some additional calculations). Theorem 1.3 in \cite{Iksanov+Kabluchko:2018a} entails that $$\frac{N_j(t\cdot)-(t\cdot)^j/(j!\mu^j)}{\sqrt{\sigma^2\mu^{-2j-1}t^{2j-1}}}$$ converges weakly to a $(j-1)$-times integrated Brownian motion in $D$ equipped with the $J_1$-topology. Of course, this immediately yields \eqref{weak} with $\rho=j$ and $c=(j!\mu^j)^{-1}$.

\subsection{Particular $X$}

Let $(\eta_k)_{k\in\mn}$ be independent copies of a random variable $\eta$ such that, for each $k\in\mn_0$, $\eta_{k+1}$ is independent of $(T_0,\ldots, T_k)$.

In Section 3 of \cite{Iksanov+Marynych+Meiners:2017a} it was checked that the covariance functions $f$ of the response processes $X$ discussed in parts (a), (b) and (e) below (parts (a) and (b) below) are regularly varying in $\mr^2_+$ of index $\beta$ (satisfy \eqref{0040}).

\noindent (a) Let $X(t)=\1_{\{\eta>t\}}-\mmp\{\eta>t\}$ with $\mmp\{\eta>t\}\sim t^\beta\ell(t)$ as $t\to\infty$ for some $\beta\in (-1,0)$. In this case, $C(u,w)=(u\vee w)^\beta$ for $u,w>0$, so that $C(u, u+w)=(u+w)^\beta$ is continuous in $u$ for every $w>0$. Further, $v(t)=\mmp\{\eta>t\}\mmp\{\eta\leq t\}$ is bounded. Finally, condition \eqref{eq:Lindeberg X-h} holds in view of $|X(t)|\leq 1$ a.s.

\noindent (b) Let $X(t)=\eta g(t)$, where $\me \eta=0$, $\Var \eta\in (0,\infty)$ and $g:[0,\infty)\to\mr$ varies regularly at $\infty$ of index $\beta/2$ for some $\beta>-1$ and $g\in D$. In this case, $C(u,w)=(uw)^{\beta/2}$ for $u,w>0$, so that $C(u, u+w)=(u(u+w))^{\beta/2}$ is continuous in $u$ for every $w>0$. Also, $v(t)=(\Var \eta)g^2(t)$ is locally bounded. Let $\rho>0$. Observe now that $\lim_{t\to\infty}(\sqrt{t^\rho v(t)}/|g(t)|)=\infty$
implies that, for all $y>0$, $$\me X^2(t)\1_{\{|X(t)|>y\sqrt{t^\rho v(t)}\}}=g^2(t)\me \eta^2\1_{\{|\eta|>y\sqrt{t^\rho v(t)}/|g(t)|\}}=o(v(t)),\quad t\to\infty,$$ that is, \eqref{eq:Lindeberg X-h} holds. The corresponding limit process admits a stochastic integral representation $$V_{\beta,\,\rho}(u)=\int_{[0,\,u]}(u-y)^{\beta/2}{\rm d}W(y^\rho),\quad u>0,$$ where $(W(u))_{u\geq 0}$ is a Brownian motion and $\beta>-(\rho\wedge 1)$.

\noindent (c) Let $X$ be a $D$-valued centered random process with finite second moments satisfying, for some interval $I\subset (0,\infty)$, $\me \sup_{s\in I}X^2(s)<\infty$.
Assume also it is self-similar of Hurst exponent $\beta/2$ for some $\beta>0$. By self-similarity, $v(t)=t^\beta \me X^2(1)$ (locally bounded function) and $$\frac{f(ut,wt)}{v(t)}=\frac{\me X(u)X(w)}{\me X^2(1)},\quad u, w>0$$ which shows that $f$ is regularly varying in $\mr^2_+$ of index $\beta$ with limit function $C(u,w)=(\me X(u)X(w))/(\me X^2(1))$ and that \eqref{0040} trivially holds. Continuity of $C(u, u+w)$ in $u>0$ for every $w>0$ is justified by the facts that, with probability one, $X(u)X(u+w)$ does not have fixed discontinuities and that $\me \sup_{s\in [a,b]}X^2(s)<\infty$ for all $0<a<b<\infty$ (use self-similarity) in combination with the Lebesgue dominated convergence theorem: for any deterministic $u>0$ $\lim_{s\to 0} X(u+s)X(u+s+w)=X(u)X(u+w)$ a.s.\ and for any  $s\in\mr$ sufficiently close to $0$ $|X(u+s)X(u+s+w)|\leq \sup_{v\in [a,\,b]}X^2(v)$ a.s.\ for large enough $b>0$ and small enough $a>0$. Finally, condition \eqref{eq:Lindeberg X-h} holds in view of $$\me X^2(t)\1_{\{|X(t)|>y\sqrt{t^\rho v(t)}\}}= t^\beta \me X^2(1)\1_{\{|X(1)|>(\me X^2(1))^{1/2}yt^{\rho/2}\}}=o(t^\beta),\quad t\to\infty,$$ where $\rho>0$.

In particular, if $X(t)=W(t^\beta)$ for $\beta>0$, where, as before, $(W(t))_{t\geq 0}$ is a Brownian motion, then, for any $\rho>0$, $V_{\beta,\,\rho}(u)=(\rho {\rm B}(\beta+1,\rho))^{1/2}W(u^{\beta+\rho})$ for $u\geq 0$.

\noindent (d) Let $X(t)=N(t)-\me N(t)=N(t)-m(t)$, where $(N(t))_{t\geq 0}$ is a non-homogeneous Poisson process with mean function $m(t)$ as discussed in Section \ref{nonhom}. In this case, $v(t)=m(t)\sim c_0 t^{\rho_0}$ as $t\to\infty$. Since $m(t)$ is a nondecreasing function, it must be locally bounded. For $u,v>0$, $f(u,v)=\me (N(u)-m(u))(N(v)-m(v))= m(u\wedge v)$. Hence, $f$ is regularly varying in $\mr_+^2$ of index $\rho_0$ with limit function $C(u,v)=(u\wedge v)^{\rho_0}$. Further, it is obvious that \eqref{0040} holds and that, for every $w>0$, $C(u, u+w)=u^{\rho_0}$ is continuous in $u$. It remains to check that condition \eqref{eq:Lindeberg X-h} holds. To this end, we use H\"{o}lder's inequality and then Markov's inequality to obtain, for $\rho, y>0$,
\begin{eqnarray*}
&&\me (N(t)-m(t))^2\1_{\{|N(t)-m(t)|>y\sqrt{t^\rho m(t)}\}}\\&\leq& \big(\me (N(t)-m(t))^4\big)^{1/2}\big(\mmp\{|N(t)-m(t)|>y\sqrt{t^\rho m(t)}\}\big)^{1/2}\\&\leq& (m(t)(1+3m(t)))^{1/2}y^{-1}t^{-\rho/2}=o(m(t))
\end{eqnarray*}
which proves \eqref{eq:Lindeberg X-h}.

The limit process $V_{\rho_0,\,\rho}$ is the same time-changed Brownian motion as in point (c) in which the role of $\beta$ is played by $\rho_0$.

To give a concrete specialization of Theorem \ref{Prop:mu<infty} let $Y(t)$ denote the number of the second generation individuals in a BRW generated by a non-homogeneous Poisson process $(N(t))_{t\geq 0}$ as above. Then $(Y(t))_{t\geq 0}$ is a random process with immigration at random times, for $Y(t)$ admits a representation similar to \eqref{z} in which we take $j=2$, replace $N_2(t)$ with $Y(t)$ and $N_1(t)$ with $N(t)$ and let $(T_{k,1})_{k\in\mn}$ denote the atoms of $(N(t))_{t\geq 0}$. We shall write $T_k$ for $T_{k,1}$. According to Theorem \ref{Prop:mu<infty} in combination with the discussion above and in Section \ref{nonhom} we have the following limit theorem with a random centering:
$$\frac{Y(ut)-\sum_{k\geq 1}m(ut-T_k)\1_{\{T_k\leq ut\}}}{c_0 (\rho_0{\rm B}(\rho_0+1, \rho_0))^{1/2} t^{\rho_0}} ~\fdc~W(u^{2\rho_0}), \quad
t\to\infty,$$ where $(W(u))_{u\geq 0}$ is a Brownian motion.

\noindent (e) Let $X(t)=(t+1)^{\beta/2}Z(t)$, where $\beta\in (-1,0)$ and $(Z(t))_{t\geq 0}$ is a stationary Ornstein-Uhlenbeck process with variance $1/2$. In this case, $f(u,w) =\me (X(u)X(w)) = 2^{-1}(u+1)^{\beta/2}(w+1)^{\beta/2}e^{-|u-w|}$ is fictitious regularly varying in $\mr^2_+$ of index $\beta$. Furthermore, condition \eqref{0040} holds, that is, for every $w>0$, $$\frac{f(ut, (u+w)t)}{v(t)}=\frac{(ut+1)^{\beta/2}((u+w)t+1)^{\beta/2}}{(t+1)^\beta}e^{-wt}$$ converges to $0$, as $t\to\infty$ uniformly in $u\in [a, b]$ for all $0<a<b<\infty$. This stems from the fact that while the first factor converges to $u^{\beta/2}(u+w)^{\beta/2}$ uniformly in $u\in [a,b]$, the second factor converges to zero and does not depend on $u$. Further, $v(t)=2^{-1}(t+1)^\beta$ is bounded. By stationarity, for each $t > 0$, $Z(t)$ has the same distribution as a random variable $\theta$ having the  normal distribution with zero mean and variance $1/2$. Hence, with $\rho>0$,
$$\me X^2(t)\1_{\{|X(t)|>y\sqrt{t^\rho v(t)}\}}= (t+1)^\beta \me \theta^2\1_{\{|\theta|>2^{-1/2}yt^{\rho/2}\}}=o(t^\beta),\quad t\to\infty,$$ that is, condition \eqref{eq:Lindeberg X-h} holds. For $\beta>-(\rho\wedge 1)$, the corresponding limit process $V_{\beta,\,\rho}$ is a centered Gaussian process with independent values.

\section{Proof of Theorem \ref{Prop:mu<infty}}\label{pr}

When $C(u,w)=0$ for all $u,w>0$, $u\neq w$, the process $V_{\beta,\,\rho}$ exists as a Gaussian process with independent values, see Definition \ref{definition_v_process}. Now we intend to show that the Gaussian process $V_{\beta,\,\rho}$
is well-defined in the complementary case when $C(u,w)>0$ for some $u,w>0$, $u\neq w$. To this end, we check that the function $\Pi(s,t)$
given by
$$\Pi(s,t) ~:=~ \int_0^{s\wedge t} C(s-y,t-y) \, {\rm d}y^\rho,\quad s,t>0$$
is finite and positive semidefinite, that is, for any $j\in\mn$,
any $\gamma_1,\ldots, \gamma_j\in\mr$ and any
$0<v_1<\ldots<v_j<\infty$
\begin{align}
0\leq \sum_{i=1}^j & \gamma_i^2 \Pi(v_i,\,v_i)+2\sum_{1\leq r<l\leq j} \gamma_r\gamma_l \Pi(v_r, v_l)   \notag \\
&= \sum_{i=1}^{j-1}
\int_{v_{i-1}}^{v_i}\bigg(\sum_{s=i}^j\gamma_s^2 C(v_s-y,v_s-y)
+2\sum_{i\leq r<l\leq j}\gamma_r\gamma_l C(v_r-y,v_l-y)\bigg) \, {\rm d}y^\rho \notag \\
&\hphantom{=} +\gamma_j^2 \int_{v_{j-1}}^{v_j} C(v_j-y, v_j-y)
\,{\rm d}y^\rho,\label{defi}
\end{align}
where $v_0:=0$. In view of
\begin{equation}    \label{eq:impo ineq1}
|f(s,t)| \leq 2^{-1}(v(s)+v(t)), \quad s,t\geq 0,
\end{equation}
we infer
\begin{equation}    \label{eq:impo ineq2}
C(s-y,t-y) \leq 2^{-1} ((s-y)^\beta+(t-y)^\beta).
\end{equation}
Since $\beta>-1$ by assumption the latter ensures
$\Pi(s,t)<\infty$ for all $s,t>0$. Now we pass to the proof of
\eqref{defi}. Since the second term on the right-hand side of
\eqref{defi} is nonnegative, it suffices to prove that so is the
first. The function $C(s,t)$, $s,t>0$ is positive semidefinite as
a limit of positive semidefinite functions. Hence, for each $1\leq
i\leq j-1$ and $y\in (v_{i-1},\,v_i)$,
$$\sum_{s=i}^j\gamma_s^2 C(u_s-y, u_s-y)+2\sum_{i\leq r<l\leq
j}\gamma_r\gamma_l C(u_r-y,u_l-y)\geq 0.$$ Thus, the process
$V_{\beta,\,\rho}$ does exist as a Gaussian process with
covariance function $\Pi(s,t)$, $s,t>0$.

\begin{proof}[Proof of Theorem \ref{Prop:mu<infty}]
We treat simultaneously the case when $C(u,w)>0$ for some $u,w>0$, $u\neq w$ and the complementary case.

According to the Cram\'{e}r-Wold device relation \eqref{eq:1st
summand convergence} is equivalent to
\begin{equation}    \label{eq:Cramer-Wold device}
\frac{\sum_{i=1}^j \alpha_i \sum_{k \geq
0}X_{k+1}(u_it-T_k)\1_{\{T_k \leq u_it\}}}{\sqrt{ct^\rho v(t)}}
~\stackrel{\mathrm{d}}{\to}~ \sum_{i=1}^j \alpha_i
V_{\beta,\rho}(u_i)
\end{equation}
for all $j\in\mn$, all $\alpha_1,\ldots, \alpha_j\in\mr$ and all
$0<u_1<\ldots<u_j<\infty$. Here and hereafter, $\stackrel{\mathrm{d}}{\to}$ denotes convergence in distribution.
Define the $\sigma$-algebras $\mathcal{F}_0:=\sigma(T_0)$
and $\mathcal{F}_k:=\sigma(T_0, X_1, T_1, \ldots, X_k, T_k)$ for
$k\in\mn$ and set $\me_k(\cdot):=\me (\cdot|\mathcal{F}_k)$,
$k\in\mn_0$. Now observe that
$$\me_k \sum_{i=1}^j\alpha_i X_{k+1}(u_it-T_k)\1_{\{T_k\leq
u_i t\}}= 0,\quad k\in\mn_0$$ which shows that $\sum_{k\geq
0}\sum_{i=1}^j\alpha_i X_{k+1}(u_it-T_k)\1_{\{T_k\leq u_i t\}}$ is
a martingale limit.  In view of this, in order to prove
\eqref{eq:Cramer-Wold device}, one may use the martingale central
limit theorem (Corollary 3.1 in \cite{Hall+Heyde:1980}). The
theorem tells us that it suffices to verify
\begin{align}       \label{eq:mgale CLT1}
\sum_{k\geq 0} \me_k (Z_{k+1,\,t}^2) ~\stackrel{\mmp}{\to}~
D_{\beta,\rho}(u_1,\ldots, u_j),\quad t\to\infty
\end{align}
and
\begin{equation}    \label{eq:mgale CLT2}
\sum_{k\geq 0} \me_k
\big(Z_{k+1,\,t}^2\1_{\{|Z_{k+1,\,t}|>y\}}\big)
~\stackrel{\mmp}{\to}~ 0,\quad t\to\infty
\end{equation}
for all $y>0$, where
\begin{equation*}
Z_{k+1,\,t} := \frac{\sum_{i=1}^j \alpha_i \1_{\{T_k\leq
u_it\}}X_{k+1}(u_it-T_k)}{\sqrt{ct^\rho v(t)}}, \quad k\in\mn_0,\
t>0.
\end{equation*}

\noindent {\it Proof of \eqref{eq:mgale CLT1}}. We start by
writing
\begin{eqnarray*}
\sum_{k\geq 0} \me_k (Z_{k+1,\,t}^2) & = &
\frac{\sum_{i=1}^j \alpha_i^2 \sum_{k \geq 0}\1_{\{T_k\leq u_it\}}v(u_it-T_k)}{ct^\rho v(t)}  \\
& & + \frac{2\sum_{1\leq r<l\leq j}\alpha_r\alpha_l \sum_{k\geq
0}\1_{\{T_k\leq u_rt\}}f(u_rt-T_k, u_lt-T_k)}{ct^\rho v(t)}.
\end{eqnarray*}
We shall prove that, as $t\to\infty$,
\begin{equation}    \label{eq:convergence at u_1}
\frac{\sum_{k\geq 0} \1_{\{T_k\leq u_it\}}v(u_it-T_k)}{ct^\rho
v(t)} ~=~ \frac{\int_{[0,\,u_i]} v(t(u_i-y)) \, {\rm
d}N(ty)}{ct^\rho v(t)} ~\stackrel{\mmp}{\to}~ \rho {\rm
B}(\beta+1, \rho) u_i^{\beta+\rho}
\end{equation}
for all $1\leq i\leq j$ and that
\begin{eqnarray}    \label{eq:convergence at u_1,u_2}
\frac{\sum_{k\geq 0}\1_{\{T_k\leq
u_rt\}}f(u_rt-T_k,u_lt-T_k)}{ct^\rho v(t)} & = &
\frac{\int_{[0,\,u_r]}f(t(u_r-y),t(u_l-y)) \, {\rm d}N(ty)}{ct^\rho v(t)}    \notag  \\
& \stackrel{\mmp}{\to} & \int_0^{u_r} C(u_r-y,u_l-y){\rm d}y^\rho
\end{eqnarray}
for all $1\leq r<l\leq j$.

Fix any $u_r<u_l$ and pick $\varepsilon \in (0,u_r\wedge 1)$. %By the
%functional strong law of large numbers (Theorem 4 in
%\cite{Glynn+Whitt:1988})
%\begin{equation*}
%\lim_{t \to \infty}
%\sup_{y\in[0,\,u_i]}\Big|\frac{\nu(ty)}{\mu^{-1}t}-y\Big|=0
%\end{equation*}
%almost surely.
%By the uniform convergence theorem for regularly varying functions
%(Theorem 1.5.2 in \cite{Bingham+Goldie+Teugels:1989}),
%\begin{equation*}
%\lim_{t \to \infty} \frac{v(t(u_r-y))}{v(t)} = (u_r-y)^\beta
%\end{equation*}
%uniformly in $y \in [0,\,u_r-\varepsilon]$.
%Also,
%\begin{equation*}
%\lim_{t \to \infty} \frac{f(t(u_r-y),t(u_l-y))}{v(t)} = C(u_r-y,
%u_l-y)
%\end{equation*}
%uniformly in $y\in [0,u_r-\varepsilon]$, in view of \eqref{0040}.
We claim that, as $t\to\infty$,
\begin{equation}\label{inte01}
\int_{[0,\,u_r-\varepsilon]}{v(t(u_r-y))\over v(t)} \, {\rm
d}{N(ty)\over ct^\rho}\quad \overset{\mathbb{P}}{\to}\quad
\int_0^{u_r-\varepsilon}(u_r-y)^\beta \,{\rm d}y^\rho %   ~=~
%\frac{u_i^{1+\beta}-\varepsilon^{1+\beta}}{1+\beta}
\end{equation}
and
\begin{equation}\label{inte02}
\int_{[0,\,u_r-\varepsilon]} {f(t(u_r-y),t(u_l-y))\over v(t)} \,
{\rm d} {N(ty)\over ct^\rho}\quad \overset{\mmp}{\to}\quad
\int_0^{u_r-\varepsilon}C(u_r-y, u_l-y){\rm d}y^\rho.
\end{equation}

To prove these limit relations we need some preparation. For each
$t>0$, the random function $\mathcal{G}_t$ defined by
$\mathcal{G}_t(y):=0$ for $y<0$, $:=N(ty)/N(tu_r)$ for
$y\in[0,u_r)$, and $=1$ for $y\geq u_r$ is a random distribution
function. Similarly, the function $\mathcal{G}$ defined by
$\mathcal{G}(y):=0$ for $y<0$, $:=(y/u_r)^\rho$ for $y\in[0,u_r)$,
and $=1$ for $y\geq u_r$ is a distribution function. %In view of
%\eqref{weak}, $N(ty)/N(tu_r)\tp (y/u_r)^\rho$ as $t\to\infty$.
%Hence,
According to \eqref{weak}, for every sequence $(t_n)_{n\in\mn}$ there exists a
subsequence $(t_{n_s})_{s\in\mn}$ such that $\lim_{s\to\infty}
t_{n_s}^{-\rho} N(t_{n_s}y)= cy^\rho$ a.s.\ for each $y\in [0,u_r]$. We would like to stress that uniformity of the convergence in \eqref{weak} ensures that
the subsequence $(t_{n_s})_{s\in\mn}$ does not depend on $y$ (without the uniformity assumption we should have taken a new subsequence $(t_{n_s})_{s\in\mn}$ for each particular $y\in [0,\,u_r]$; this would not be sufficient for what follows). The last limit relation guarantees $\lim_{s\to\infty}
N(t_{n_s}y)/N(t_{n_s}u_r)= (y/u_r)^\rho$ a.s.\ for each $y\in [0,u_r]$. Therefore, as $s\to\infty$ $\mathcal{G}_{t_{n_s}}$ converges weakly to
$\mathcal{G}$ with probability one.

\noindent {\sc Proof of \eqref{inte01}}. Write
\begin{eqnarray*}
&&\Big|\int_{[0,\,u_r-\varepsilon]}\frac{v(t_{n_s}(u_r-y))}{v(t_{n_s})}{\rm
d}\mathcal{G}_{t_{n_s}}(y)-\int_{[0,\,u_r-\varepsilon]}(u_r-y)^\beta
{\rm d}\mathcal{G}(y) \Big|\\&\leq&
\int_{[0,\,u_r-\varepsilon]}\Big|\frac{v(t_{n_s}(u_r-y))}{v(t_{n_s})}-(u_r-y)^\beta
\Big|{\rm
d}\mathcal{G}_{t_{n_s}}(y)\\&+&\Big|\int_{[0,\,u_r-\varepsilon]}(u_r-y)^\beta
{\rm
d}\mathcal{G}_{t_{n_s}}(y)-\int_{[0,\,u_r-\varepsilon]}(u_r-y)^\beta
{\rm d}\mathcal{G}(y)\Big|.
\end{eqnarray*}
By the uniform convergence theorem for regularly varying functions
(Theorem 1.5.2 in \cite{Bingham+Goldie+Teugels:1989}),
\begin{equation}\label{inte03}
\lim_{t \to \infty} \frac{v(t(u_r-y))}{v(t)} = (u_r-y)^\beta
\end{equation}
uniformly in $y \in [0,\,u_r-\varepsilon]$. This implies that the
first summand on the right-hand side of the penultimate centered
formula converges to $0$ a.s.\ as $s\to\infty$. The second summand
does so by the following reasoning. The function $g$ defined by
$g(y):=(u_r-y)^\rho$ for $y\in [0, u_r-\varepsilon]$ and $:=0$ for
$y>u_r-\varepsilon$ is bounded with one discontinuity point. With
this at hand it remains to invoke the aforementioned weak
convergence with probability one and the fact that $\mathcal{G}$
is a continuous distribution function. This implies that the
left-hand side of the penultimate centered formula with $t$
replacing $t_{n_s}$ converges in probability to $0$ as
$t\to\infty$. Multiplying it by $N(tu_r)/(ct^\rho)$ which
converges to $u_r^\rho$ in probability as $t\to\infty$ we arrive
at \eqref{inte01}.

\noindent {\sc Proof of \eqref{inte02}} is analogous. Instead of
\eqref{inte03} one has to use the following relation which is a
consequence of \eqref{0040}:
\begin{equation*} \lim_{t \to \infty} \frac{f(t(u_r-y),t(u_l-y))}{v(t)} = C(u_r-y, u_l-y)
\end{equation*}
uniformly in $y\in [0,u_r-\varepsilon]$. The role of $g$ is now
played by $g^\ast(y):=C(u_r-y, u_l-y)$ for $y\in [0,
u_r-\varepsilon]$ and $:=0$ for $y>u_r-\varepsilon$. In view of
\eqref{eq:impo ineq2}, this function is bounded. Also, $g^\ast$ is
a.e.\ continuous by assumption which in combination with the
absolute continuity of $\mathcal{G}$ is enough for completing the
proof of \eqref{inte02}.

As $\varepsilon\to 0+$, the right-hand sides of \eqref{inte01} and
\eqref{inte02} converge to $\int_0^{u_r}(u_r-y)^\beta \,{\rm
d}y^\rho=\rho {\rm B}(\beta+1, \rho)u_r^{\beta+\rho}$ and
$\int_0^{u_r} C(u_r-y,u_l-y){\rm d}y^\rho$, respectively. Thus,
relations \eqref{eq:convergence at u_1} and \eqref{eq:convergence
at u_1,u_2} are valid if we can show (see Theorem 4.2 in
\cite{Billingsley:1968}) that
\begin{equation}\label{aux1}
\lim_{\varepsilon\to 0+} \limsup_{t\to\infty} \mmp\bigg\{
\frac{\int_{(u_r-\varepsilon, \, u_r]} v(t(u_r-y)) \, {\rm
d}N(ty)}{ct^\rho v(t)} > \delta\bigg\} = 0
\end{equation}
and
\begin{equation}\label{aux2}
\lim_{\varepsilon\to 0+} \limsup_{t\to\infty} \mmp\bigg\{
\frac{\big|\int_{(u_r-\varepsilon, \, u_r]}f(t(u_r-y),t(u_l-y)) \,
{\rm d}N(ty) \big|}{ct^\rho v(t)}>\delta\bigg\}=0
\end{equation}
for any $\delta>0$.

Using \eqref{eq:impo ineq1} we obtain
\begin{align}
\int_{(u_r-\varepsilon, \, u_r]} & |f(t(u_r-y), t(u_l-y))| \, {\rm d}N(ty)\notag   \\
& \leq~ 2^{-1}\Big(\int_{(u_r-\varepsilon, \, u_r]} v(t(u_r-y))
\, {\rm d}N(ty)+ \int_{(u_r-\varepsilon, \, u_r]} v(t(u_l-y)) \, {\rm d}N(ty)\Big)\label{aux00}
\end{align}
which shows that a proof of \eqref{aux2} includes that of \eqref{aux1}. Therefore, we shall only prove \eqref{aux2}.

We first treat the second summand on the right-hand side of \eqref{aux00}. Since $$\lim_{t\to\infty}\frac{v(t(u_l-y))}{v(t)}=(u_l-y)^\beta$$ uniformly in $y\in (u_r-\varepsilon, u_r]$ (recall that $u_r<u_l$) we can use the argument given after formula \eqref{inte03} to conclude that $$\frac{\int_{(u_r-\varepsilon, \, u_r]} v(t(u_l-y)) \, {\rm d}N(ty)}{ct^\rho v(t)}~\tp~\int_{(u_r-\varepsilon, \,u_r]}(u_l-y)^\beta {\rm d}y^\rho,\quad t\to\infty.$$ The right-hand side converges to zero as $\varepsilon\to 0+$.

Now we are passing to the analysis of the first summand on the right-hand side of \eqref{aux00}. According to Potter's bound (Theorem 1.5.6 (iii) in
\cite{Bingham+Goldie+Teugels:1989}), for any chosen $A>1$,
$\gamma\in (0,\beta)$ when $\beta>0$ and $\gamma\in (0,\beta+1)$ when $\beta\in
(-(\rho\wedge 1),0]$ there exists $t_0>0$ such that
$$\frac{v(t(u_r-y))}{v(t)}\leq A (u_r-y)^{\beta-\gamma}$$ whenever $t\geq t_0$ and $t(u_r-y)\geq
t_0$. Then, for $t\geq t_0/\varepsilon$,
\begin{eqnarray}
&&\frac{\int_{(u_r-\varepsilon, \, u_r]} v(t(u_r-y)) \, {\rm
d}N(ty)}{ct^\rho v(t)}\notag\\&\leq& \frac{\int_{(u_r-\varepsilon, \,
u_r-t_0/t]} v(t(u_r-y)) \, {\rm d}N(ty)}{ct^\rho
v(t)}+\frac{\int_{(u_r-t_0/t, \, u_r]} v(t(u_r-y)) \, {\rm
d}N(ty)}{ct^\rho v(t)}\notag\\&\leq& \frac{A \int_{(u_r-\varepsilon, \,
u_r-t_0/t]}(u_r-y)^{\beta-\gamma}{\rm d}N(ty)}{ct^\rho}+
\frac{(N(tu_r)-N(tu_r-t_0))\sup_{x\in [0,\,t_0]} v(x)}{ct^\rho v(t)}.\label{aux0}
\end{eqnarray}
We claim that the second term on the right-hand side in \eqref{aux0} converges to zero in probability as $t\to\infty$. For the proof we first note that the function $v$ is locally bounded by assumption. With this at hand, the claim follows from \eqref{incr1} in combination with Markov's inequality when $\beta\in (-(\rho\wedge 1), 0)$ or $\beta=0$ and ${\lim\inf}_{t\to\infty}v(t)=0$ and from $t^{-\rho}(N(t)-N(t-t_0))\tp 0$ as $t\to\infty$ which, in its turn, is a consequence of \eqref{weak} when $\beta>0$ or $\beta=0$ and ${\lim\inf}_{t\to\infty}v(t)>0$.

While treating the first summand on the right-hand side in \eqref{aux0} we consider two cases separately.

\noindent {\sc Case $\beta>0$} in which $\beta-\gamma>0$.
The first summand is bounded from above by $A\varepsilon^{\beta-\gamma}N(tu_r)/(ct^\rho)$ which converges to $A\varepsilon^{\beta-\gamma}u_r^\rho$ in probability as $t\to\infty$. Therefore, for any $\delta>0$, $$\limsup_{t\to\infty} \mmp\{A\varepsilon^{\beta-\gamma}N(tu_r)/(ct^\rho) > \delta\} \leq \1_{[0,A\varepsilon^{\beta-\gamma}u_r^\rho]}(\delta).$$ It remains to note that the right-hand side converges to zero as $\varepsilon\to 0+$.

\noindent {\sc Case $\beta\in (-(\rho\wedge 1), 0]$} in which $\beta-\gamma<0$. Invoking Markov's inequality we see that it suffices to prove that
\begin{equation}    \label{eq:Markov's inequality at u_1}
\lim_{\varepsilon\to 0+} \limsup_{t\to\infty}
\frac{\int_{(u_r-\varepsilon, \, u_r]}(u_r-y)^{\beta-\gamma}\, {\rm
d}L(ty)}{t^\rho} = 0,
\end{equation}
where $L(t):=\me N(t)$ for $t\geq 0$.

Write, for large enough $t$, positive constants $C_1$ and $C_2$, and $i=1,2$
\begin{eqnarray*}
\int_{(u_r-\varepsilon, u_r]}(u_r-y)^{\beta-\gamma}\, {\rm
d}L(ty)&\leq& \sum_{k=0}^{[\varepsilon t]}\int_{(u_r-t^{-1}(k+1), u_r-t^{-1}k]}(u_r-y)^{\beta-\gamma}\, {\rm
d}L(ty)\\&\leq& \sum_{k=0}^{[\varepsilon t]}(k/t)^{\beta-\gamma}(L(tu_r-k)-L(tu_r-(k+1)))\\&\leq&
\begin{cases}
C_1t^{-(\beta-\gamma)}\sum_{k=0}^{[\varepsilon t]}k^{\beta-\gamma}(tu_r-k)^{\rho-1}, &   \text{if } \ \rho\geq 1,   \\
C_2t^{-(\beta-\gamma)}\sum_{k=0}^{[\varepsilon t]}k^{\beta-\gamma}(tu_r-k+1)^{\rho-1}, & \text{if} \ \rho\in (0,1),
\end{cases}
\\&\leq& C_it^{-(\beta-\gamma)}\sum_{k=1}^{[\varepsilon t]}\int_{k-1}^k y^{\beta-\gamma}(tu_r-y)^{\rho-1}{\rm d}y \\&\leq& C_i t^{-(\beta-\gamma)}\int_0^{\varepsilon t} y^{\beta-\gamma}(tu_r-y)^{\rho-1}{\rm d}y\\&=&C_i t^\rho \int_0^\varepsilon y^{\beta-\gamma}(u_r-y)^{\rho-1}{\rm d}y,
\end{eqnarray*}
where the third inequality is a consequence of \eqref{incr1}, and we take $i=1$ when $\rho\geq 1$ and $i=2$ when $\rho\in (0,1)$. This proves \eqref{eq:Markov's inequality at u_1}, and \eqref{eq:mgale CLT1} follows.

\noindent {\it Proof of \eqref{eq:mgale CLT2}}: The following inequality holds for real $a_1,\ldots,a_m$
\begin{eqnarray}\label{for_ref}
(a_1+\ldots+a_m)^2\1_{\{|a_1+\ldots+a_m|>y\}}&\leq&
(|a_1|+\ldots+|a_m|)^2\1_{\{|a_1|+\ldots+|a_m|>y\}}\notag\\&\leq&
m^2 (|a_1| \vee\ldots\vee |a_m|)^2\1_{\{m(|a_1| \vee\ldots\vee
|a_m|)>y\}}\notag\\&\leq&
m^2\big(a_1^2\1_{\{|a_1|>y/m\}}+\ldots+a_m^2\1_{\{|a_m|>y/m\}}\big).
\end{eqnarray}
This in combination with the regular variation of $t^\rho v(t)$ guarantees it is sufficient to show
that
\begin{equation}\label{Proof_of_Prop21_conv_to_0}
\sum_{k\geq 0} \! \1_{\{T_k \leq t\}} \me_k \!
\bigg(\frac{X^2_{k+1}(t-T_k)}{t^\rho v(t)}
\!\1_{\{|X_{k+1}(t-T_k)|>y\sqrt{t^\rho v(t)}\}}\bigg)
~\stackrel{\mmp}{\to}~ 0
\end{equation}
for all $y>0$.

By Proposition 1.5.8 in \cite{Bingham+Goldie+Teugels:1989}, $t^\rho v(t)\sim (\rho+\beta)\int_0^t y^{\rho-1}v(y){\rm d}y$ as $t\to\infty$. Therefore, while proving Theorem \ref{Prop:mu<infty} we can interchangeably use $t^\rho v(t)$ or $(\rho+\beta)\int_0^t y^{\rho-1}v(y){\rm d}y$ in the denominator of \eqref{eq:1st summand convergence}. Therefore, without loss of generality we can and do assume that $t^\rho v(t)$ is nondecreasing, for so is its asymptotic equivalent. Thus, relation
\eqref{Proof_of_Prop21_conv_to_0} follows if we can prove that
\begin{equation}    \label{eq:renewal asymptotics}
\frac{1}{t^\rho v(t)}\int_{[0,\,t]} v_y(t-x) \, {\rm
d}N(x)~\tp~ 0,\quad t\to\infty
\end{equation}
for all $y>0$.

Fix any $y>0$. Formula \eqref{eq:Lindeberg X-h} ensures that given $\varepsilon>0$ there exists $t_0>0$ such that $v_y(t)\leq \varepsilon v(t)$ whenever $t\geq t_0$. With this at hand we obtain
\begin{eqnarray*}
\frac{1}{t^\rho v(t)}\int_{[0,\,t]} v_y(t-x) \, {\rm
d}N(x)&=&\frac{1}{t^\rho v(t)}\Big(\int_{[0,\,t-t_0]} v_y(t-x) \, {\rm
d}N(x)+\int_{(t-t_0,\,t]} v_y(t-x) \, {\rm
d}N(x)\Big)\\&\leq& \frac{\varepsilon}{t^\rho v(t)}\int_{[0,\,t]} v(t-x) \, {\rm
d}N(x)\\&+&\frac{(N(t)-N(t-t_0))\sup_{x\in [0,\,t_0]}v_y(x)}{t^\rho v(t)}.
\end{eqnarray*}
Using \eqref{eq:convergence at u_1} with $u_i=1$ and denoting the first summand on the right-hand side by $J(t,\varepsilon)$ we conclude that, for any $\delta>0$, $$\lim_{\varepsilon\to 0+}\limsup_{t\to\infty} \mmp\{J(t,\varepsilon)> \delta\}=0.$$  Since $v_y(t)\leq v(t)$ for all $t\geq 0$, and $v$ is locally bounded by assumption, so is $v_y$. Therefore, the second summand on the right-hand side converges to zero in probability as $t\to\infty$ by the same reasoning as given for the second summand on the right-hand side of \eqref{aux0}.

The proof of Theorem \ref{Prop:mu<infty} is complete.
\end{proof}

\vspace{5mm}

\noindent {\bf Acknowledgement}. The support of the Chinese Belt and Road Programme DL20180077 is gratefully acknowledged.

\end{document}